\theoremstyle{plain}
\def\R{\mathbb{R}}
\def\Q{\mathbb{Q}}
\def\Z{\mathbb{Z}}
\def\N{\mathbb{N}}
\def\O{\mathcal{O}}
\def\nm{\lVert\cdot\rVert}
\def\deg{\widehat{\mathrm{deg}}}
\def\vol{\widehat{\mathrm{vol}}}
\def\adlcv{(K,(\Omega, \mathcal A,\nu),\phi)}
\def\Div{\mathrm{Div}}
\def\mumin{\widehat{\mu}_{\min}}
\def\mumax{\widehat\mu_{\max}}
\def\xanomega{X^{\mathrm{an}}_\omega}
\newcommand\tint{\mathop{\mathpalette\tb@int{t}}\!\int}
\newcommand\bint{\mathop{\mathpalette\tb@int{b}}\!\int}
\newcommand\tb@int[2]{%
  \sbox\z@{$\m@th#1\int$}%
  \if#2t%
    \rlap{\hbox to\wd\z@{%
      \hfil
      \vrule width .35em height \dimexpr\ht\z@+1.4pt\relax depth -\dimexpr\ht\z@+1pt\relax
      \kern.05em % a small correction on the top
    }}
  \else
    \rlap{\hbox to\wd\z@{%
      \vrule width .35em height -\dimexpr\dp\z@+1pt\relax depth \dimexpr\dp\z@+1.4pt\relax
      \hfil
    }}
  \fi
}
\title{Arithmetic over trivially valued field and its applications}
\author{Wenbin LUO}
\date{June 2020}
\begin{document}

\maketitle
\begin{abstract}
By some result on the study of arithemtic over trivially valued field, we find its applications to Arakelov geometry over adelic curves. We prove a partial result of the continuity of arithmetic $\chi$-volume along semiample divisors. Moreover, we give a upper bound estimate of arithmetic Hilbert-Samuel function.
\end{abstract}
\tableofcontents
\section{Introduction}
Arakelov geometry is a theory to study varieties over $\mathcal O_K$ where $K$ is a number field. Since the closed points of $\mathrm{Spec}\mathcal O_K$ only give rise to non-Archimedean places of $K$, the main idea of Arakelov Geometry is to "compactify" $\mathrm{Spec}\mathcal O_K$ by adding Archimedean places, and the corresponding fibers are nothing but analytification of the generic fiber. In order to generalize the theory to the case over a function field or even more general cases, Chen and Moriwaki established the theory of Arakelov geometry over adelic curves\cite{adelic}. Here an adelic curve is a field equipped with a set of absolute values parametrised by a measure space. Such a structure can be easily constructed for any global fields by using the theory of adèles. The first step is to give a theory of geometry of numbers. Inspired by the study of semistability of vector bundles over projective regular curves, the arithmetic slope theory was established, and the tensorial minimal slope property was firstly proved in \cite{BOST2013}. It's easy to find that some slope-related results can be roughly described by arithmetic over trivially valued field, especially the analogous Harder-Narasimhan filtration of a adelic vector bundle due to the obvious fact that any $\R$-filtration induces an ultrametric norm over trivially valued field.

This article is to dig deeper on this, and give mainly two applications. The first is the application to the study of $\chi$-volume function $\vol_\chi(\cdot)$, which is an analogy of Euler characteristic. We can easily show some boundedness of $\vol_\chi(\cdot)$ with the assumption on the finite generation of section ring. Therefore we obtain the following result on the continuity of $\vol_\chi(\cdot)$.
\begin{theo}
Let $\overline D=(D,g)$, $\overline E_1=(E_1,h_1)$, $\dots, \overline E_r=(E_r,h_r)$ be adelic $\Q$-Cartier divisors on $X$ such that $D$ and $E_i$'s are semiample.
\begin{equation}\lim_{\substack{\epsilon_1+\cdots+\epsilon_r\rightarrow 0\\\epsilon_i\in\Q_{\geq 0}}}\vol_\chi(\overline D+\sum_{i=1}^r\epsilon_i \overline E_i)=\vol_\chi(\overline D)\end{equation}
\end{theo}
The second result worth to mention is about arithmetic Hilbert-Samuel function. In \cite{chen:hal}, H. Chen give an upper bound for arithmetic Hilbert-Samuel function of big arithmetic varieties with an assumption on the relationship between slopes and successive minima. Here in the case of $\mathrm{char}K=0$, we use the arithmetic over trivially valued to make a modification such that we can remove the assumption.
\begin{theo}[Upper bound of arithmetic Hilbert-Samuel function] Let $(D,g)$ be an adelic divisor on $X$ with $D$ being big. For each $n\in \N$, let $\overline E_n$ denote the pair of $E_n=H^0(X,\O_X(nD))$ and norm family $\xi_{ng}$. Then it holds that
$$\deg_+(\overline E_n)\leqslant \vol(D,g)\frac{n^{d+1}}{(d+1)!}+O(n^d)+(C+1/2)\cdot r_n \ln(r_n).$$
Moreover, if $D$ is ample, then 
\begin{equation}\deg(\overline E_n)\leqslant \vol_\chi(D,g)\frac{n^{d+1}}{(d+1)!}+O(n^d)+(C+1/2)\cdot r_n\ln(r_n)\end{equation}
\end{theo}
\section{Normed vector spaces and graded linear series over trivially valued field}
\subsection{Arithmetic over trivially valued field}
Let $E$ be a finitely dimensional vector space over $K$. Then any $\R$-filtration $\mathcal F^t$ on $E$ induces an ultrametric norm $\nm_{\mathcal F}$ over $(K,\lvert\cdot\rvert_0)$ which is given by
$$\lVert x\rVert_{\mathcal F}:=\exp(-\sup\{t\mid x\in\mathcal F^t E\}).$$
Moreover, this is actually a bijection between the sets of ultrametic norms and $\R$-filtrations. Moreover, the minimal slope corresponding to the $\R$-filtration can be computed by $\mumin(E,\mathcal F^t)=-\ln(\max\{\lVert x\rVert_{\mathcal F}\mid x\in E\})$. Therefore we give the following definition.
\begin{defi}
Let $\overline E=(E,\nm)$ be an ultrametric normed vector space over $(K,\lvert\cdot\rvert_0)$. We define the minimal slope of $\overline E$ by
$$\mumin(\overline E)=\begin{cases}-\ln(\max\{\lVert x\rVert\mid x\in E\}),\text{ if }E\not=0.\\
+\infty,\text{ if }E=0.
\end{cases}$$
In particular, for any injective homomorphism $f:F\rightarrow E$, we denote by $\nm_{sub(f)}$ the induced ultrametric norm on $F$, then
$$\mumin(F,\nm_{sub(f)})\geqslant \mumin(\overline E)$$
\end{defi}
\begin{prop}
Let $\overline E=(E,\nm)$ be an ultrametrically normed vector space over $(K,\lvert\cdot\rvert_0)$.
Let $$0\rightarrow F\xrightarrow{f} E\rightarrow G\rightarrow 0$$ be an exact sequence of vector spaces over $K$. Then it holds that
$$\mumin(\overline E)=\min\{\mumin(F,\nm_{sub(f)}),\mumin(G,\nm_{quot})\}.$$
\end{prop}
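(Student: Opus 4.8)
The plan is to convert the statement about minimal slopes into one about norms and then prove two inequalities. Writing $\pi\colon E\to G$ for the projection and recalling that $\mumin(\overline E)=-\ln\max_{x\neq 0}\lVert x\rVert$, with $-\ln$ strictly decreasing, the asserted identity is equivalent to
$$\max_{x\in E\setminus\{0\}}\lVert x\rVert=\max\Big\{\max_{y\in F\setminus\{0\}}\lVert y\rVert_{sub(f)},\ \max_{z\in G\setminus\{0\}}\lVert z\rVert_{quot}\Big\}.$$
All three maxima exist because each norm is attached to an $\R$-filtration with finitely many jumps and hence takes only finitely many values. Denote the three quantities by $M_E$, $M_F$, $M_G$, so the goal becomes $M_E=\max\{M_F,M_G\}$.

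The inequality $\max\{M_F,M_G\}\le M_E$ is the easy half. The subspace part is exactly the estimate recorded in the definition above, $\lVert y\rVert_{sub(f)}=\lVert f(y)\rVert\le M_E$; the quotient part holds because $\lVert z\rVert_{quot}=\inf_{\pi(x)=z}\lVert x\rVert\le M_E$ for any lift $x$ of $z$. This already gives $\mumin(\overline E)\le\min\{\mumin(F,\nm_{sub(f)}),\mumin(G,\nm_{quot})\}$.

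The reverse inequality $M_E\le\max\{M_F,M_G\}$ is the main point. I would choose $x\in E$ with $\lVert x\rVert=M_E$ and set $z=\pi(x)$. If $z=0$ then $x=f(y)$ for some $y\neq 0$, so $M_E=\lVert y\rVert_{sub(f)}\le M_F$. If $z\neq 0$, then since $\lVert\cdot\rVert_{quot}$ takes only finitely many values the defining infimum is attained: pick $x'$ with $\pi(x')=z$ and $\lVert x'\rVert=\lVert z\rVert_{quot}$. Either $\lVert z\rVert_{quot}=M_E$, in which case $M_G\ge M_E$; or $\lVert x'\rVert<\lVert x\rVert$, in which case the sharp ultrametric (``isosceles'') law over the trivially valued field, $\lVert a\rVert\neq\lVert b\rVert\Rightarrow\lVert a+b\rVert=\max\{\lVert a\rVert,\lVert b\rVert\}$, applied to $x=x'+(x-x')$ forces $\lVert x-x'\rVert=\lVert x\rVert=M_E$. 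Since $x-x'\in\ker\pi=f(F)$, this produces a nonzero element of $F$ with subspace norm $M_E$, so $M_F\ge M_E$. In all cases $\max\{M_F,M_G\}\ge M_E$, which finishes the argument.

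The only genuine subtlety I anticipate is this last step: replacing the norm-maximizing vector $x$ by the optimal lift $x'$ of $z$ and pushing the ``excess'' $x-x'$ into the kernel without dropping the norm. This is precisely where triviality of the absolute value enters, through attainment of the quotient norm and the isosceles law; the rest is bookkeeping. As a conceptual check I would note that the same identity drops out of the filtration picture: for the filtration $\mathcal F^\bullet$ of $\lVert\cdot\rVert$, the induced sub- and quotient filtrations fit into exact sequences $0\to\mathcal F^tF\to\mathcal F^tE\to\mathcal F^tG\to 0$ for every $t$, whence $\mumin(\overline E)=\sup\{t:\mathcal F^tE=E\}=\min\{\sup\{t:\mathcal F^tF=F\},\sup\{t:\mathcal F^tG=G\}\}$. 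I would present the direct norm argument as the proof and keep the filtration computation only as corroboration.
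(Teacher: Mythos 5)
Your proof is correct, and at the top level it follows the same strategy as the paper: both establish $\mumin(\overline E)\le\min\{\mumin(F,\nm_{sub(f)}),\mumin(G,\nm_{quot})\}$ directly from the definitions, then pick a norm-maximizing $x\in E$ and split into cases according to whether the maximum is traced to $F$ or to $G$. The genuine difference is in the quotient case. The paper asserts that for a maximizer $x\notin f(F)$ one has $\lVert \tilde x\rVert_{quot}=\inf_{y\in F}\lVert x+f(y)\rVert=\inf_{y\in F}\max\{\lVert x\rVert,\lVert f(y)\rVert\}=\lVert x\rVert$; the middle equality is unjustified (and false in general) when some $f(y)$ satisfies $\lVert f(y)\rVert=\lVert x\rVert$ and cancellation occurs --- for instance $E=Ke_1\oplus Ke_2$ with $\lVert e_1\rVert=\lVert e_2\rVert=1$, $\lVert e_1-e_2\rVert=e^{-1}$, $F=Ke_2$, $x=e_1$, where $\lVert\tilde x\rVert_{quot}=e^{-1}<1$. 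Your argument closes exactly this loophole: you take an optimal lift $x'$ of $\tilde x$ (legitimately, since the norm takes finitely many values), and when $\lVert x'\rVert<\lVert x\rVert$ the isosceles law gives $\lVert x-x'\rVert=\lVert x\rVert$ with $x-x'\in f(F)$, so the maximum is attained in $F$ after all --- in the example this recovers $e_2\in F$ of norm $1$, and the proposition still holds. So your write-up is not merely a restyling; it supplies the step the paper's own proof glosses over, and the price is only the (easy) observation that the quotient infimum is attained. Keeping the filtration exact-sequence computation as corroboration rather than as the proof is a reasonable choice, though it would also work as a clean standalone argument.
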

\begin{proof}When $E=0$, the proposition is trivial. We thus assume that $E\not=0$
By definition, we can easily see that 
$$\mumin(\overline E)\leqslant\min\{\mumin(F,\nm_{sub(f)}),\mumin(G,\nm_{quot})\}.$$

Let $x\in E$ be an element with $\lVert x\rVert=\exp(-\mumin(\overline E))$.
If $x$ is contained in the image of $F$, then $\mumin(F,\nm_{sub(f)})=\mumin(\overline E)$, we are done.
Otherwise, $\tilde x\not=0\in G$, it holds that $$\lVert \tilde x\rVert_{quot}=\inf\limits_{y\in F}\lVert x+f(y)\rVert=\inf\limits_{y\in F}\max\{\lVert x\rVert, \lVert f(y)\rVert\}=\lVert x\rVert$$
which implies that $\mumin(G,\nm_{quot})=\mumin(\overline E)$.
\end{proof}
\begin{rema}
The above proposition can be also obtained by using the inequality in \cite[Proposition 4.3.32]{adelic} since an ultrametric normed vector space can be viewed as an adelic vector bundle over the trivially valued field $K$.
\end{rema}
\begin{defi}
Let ${\overline E_i=(E_i,\nm_i)}_{i=1}^n$ be a collection of ultrametric normed vector spaces over $(K,\lvert\cdot\rvert_0)$. We define the a norm $\nm$ on $\mathop\oplus\limits_{i=1}^n E_i$ by
$$\lVert (a_1,\cdots,a_n)\rVert:=\max_{i=1}^n\{\lVert a_i\rVert_i\}$$
for $(a_1,\cdots,a_n)\in\mathop\oplus\limits_{i=1}^n E_i$, which is called the direct sum of norms $\{\nm_i\}_{i=1}^n$, and the ultrametric normed vector space $(\mathop\oplus\limits_{i=1}^n E_i,\nm)$ can be denoted by $\mathop\oplus\limits_{i=1,\dots,n}^{\perp}\overline E_i.$
\end{defi}
\begin{rema}
It's easy to see that $$\mumin\left(\mathop\oplus\limits_{i=1,\dots,n}^{\perp}\overline E_i\right)= \min_{i=1}^n\{\mumin(\overline E_i)\}.$$
Moreover, the successive slopes of $\mathop\oplus\limits_{i=1,\dots,n}^{\perp}\overline E_i$ is just the sorted sequence of the union of successive slopes of $\overline E_i$.
\end{rema}
\begin{defi}
Let $\overline E=(E,\nm_E)$ and $\overline F=(F,\nm_F)$ be ultrametric normed vector spaces over $(K,\lvert\cdot\rvert_0)$. The tensor product $\overline E\otimes\overline F$ is defined by equipping $E\otimes F$ with the ultrametric norm
$$\lVert x\rVert_{E\otimes F}:=\min\left\{\max_i\big\{\lVert s_i\rVert_E\cdot\lVert t_i\rVert_F\big\}\big|\text{ }x=\sum_i s_i\otimes t_i\right\} $$
\end{defi}
\begin{comment}
\begin{defi}Let 
\[
  \begin{diagram}
    \node{F} \arrow{e,t}{f} \arrow{s,l}{g'} \node{E} \arrow{s,r}{g} \\
    \node{F'} \arrow{e,b}{f'} \node{E'}
  \end{diagram}
\]
be a commutative diagram of vector spaces over $K$ where
$f$ and $f'$ are injective while $g$ and $g'$ are surjective.
Let $\nm$ be an ultrametric norm on $E$ over trivially valued field $K$.
We denote by $\nm_{sub(f)}$ the norm on $F$ induced by $f$, and $\nm_{quot}$ the quotient norm on $E'$ induced by $g$. Similarly, let $\nm_{sub(f),quot}$ and $\nm_{quot,sub(f')}$ denote the norms on $F'$ induced by $g'$ and $f'$ respectively. Note that if $ker(g')\simeq ker(g)$, then $\nm_{sub(f),quot}=\nm_{quot,sub(f')}$.
\end{defi}
\end{comment}
\subsection{Filtered graded linear series}
\begin{defi}
Let $E_\bullet=\{E_n\}_{n\in\N}$ be a collection of vector subspaces of $K(X)$ over $K$. We say $E_\bullet$ is a \textit{graded linear series} if $\mathop\oplus_{n\in\N}E_n Y^n$ is a graded sub-$K$-algebra of $K(X)[Y]$. If $\mathop\oplus_{n\in\N}E_n Y^n$ is finitely generated $K$-algebra, we say $E_\bullet$ is of finite type. We say $E_\bullet$ is of subfinite type if $E_\bullet$ is contained in a graded linear series. Moreover, if we equip each $E_n$ with an $\R$-filtration $\mathcal F_n^t E_n$, then we call $E_\bullet$ a \textit{filtered graded linear series}.

Denote by $\delta:\N\rightarrow \R$ the function maps $n$ to $C\ln \dim_K(E_n)$. We say $\{\mathcal F_n^t E_n\}_{n\in\N,t\in\R}$ satisfies $\delta$\textit{-superadditivity} if
$$\mathcal F_n^{t_1} E_n\mathcal F_m^{t_2} E_m \subset \mathcal F_{n+m}^{t_1+t_2-\delta(n)-\delta(m)}$$
holds for any $n,m\in \N$ and $t_1,t_2\in\R$
Moreover, we say $\{\mathcal F_n^t E_n\}_{n\in\N,t\in\R}$ is \textit{strong $\delta$-superadditive} if 
$$\mathcal F_{n_1}^{t_1}E_{n_1}\mathcal F_{n_2}^{t_2}E_{n_2}\cdots\mathcal F_{n_r}^{t_r}E_{n_r}\subset \mathcal F_{n_1+n_2+\cdots+n_r}^{t_1+t_2+\cdots+t_r-\delta(n_1)-\delta(n_2)-\cdots-\delta(n_r)}E_{n_1+n_2+\cdots+n_r}$$
holds for any $r\geqslant 2$, $i=1,\dots,r$, $n_i\in \N$ and $t_i\in\R$.
\end{defi}
Since each $\R$-filtration on $E_n$ corresponds to an ultrametric norm $\nm_n$ on $E_n$. The collection $\overline E_\bullet={(E_n,\nm_n)}$ is called an ultrametrically normed graded linear series. The strong $\delta$-superadditivity gives the following inequality:
$$\prod_{i=1}^r\lVert s_i\rVert_{n_i}\leqslant \left\lVert \prod_{i=1}^r s_i\right\rVert_{n_1+n_2+\cdots+n_r}\prod_{i=1}^r\dim_K(E_{n_i})^C$$
holds for any $r\geqslant 2$, $i=1,\dots,r$, $n_i\geqslant 0$ and $s_i\in E_{n_i}$. 

\begin{defi}\label{def_asymp}
We define the volume of a graded linear series $E_\bullet$ of Kodaira-dimension $d$ by
$$\mathrm{vol}(E_\bullet):=\limsup_{n\rightarrow +\infty}\frac{\dim_K(E_n)}{n^d}.$$
If $\overline E_\bullet$ is a ultrametrically normed graded linear series satisfying $\delta$-superadditivity, then we define its
\textit{arithmetic volume} and \textit{arithmetic }$\chi$\textit{-volume} by
$$\begin{aligned}\vol(\overline E_\bullet):=\limsup_{n\rightarrow +\infty} \frac{\sum \max(\widehat{\mu}_i(\overline E_n),0)}{n^{d+1}/(d+1)!},\\
\vol_\chi(\overline E_\bullet):=\limsup_{n\rightarrow +\infty}\frac{\sum \widehat{\mu}_i(\overline E_n)}{n^{d+1}/(d+1)!}.
\end{aligned}$$
Its \textit{asymptotic maximal slope}, \textit{lower asymptotic minimal slope}, and \textit{lower asymptotic minimal slope} is defined respectively by
$$\begin{aligned}\mumax^{asy}(\overline E_\bullet):=\limsup \frac{\mumax(\overline E_n)}{n},\\
\mumin^{\inf}(\overline E_\bullet):=\liminf \frac{\mumin(\overline E_n)}{n},\\
\mumin^{\inf}(\overline E_\bullet):=\limsup \frac{\mumin(\overline E_n)}{n}
\end{aligned}$$
\end{defi}
\begin{comment}
\begin{prop}
     If $\mumax^{asy}(\overline E_\bullet)<+\infty$, then for any $\epsilon>0$, it holds that
     $$\vol(\overline E_\bullet)=\int_0^{\mumax^{\inf}(\overline E_\bullet)+\epsilon}\limsup\frac{\dim_K(\mathcal F^{nt}E_n)}{n^d}dt.$$
\end{prop}
\begin{proof}
Since $\mumax^{asy}(\overline E_\bullet)<+\infty$, we obtain that $\vol(\overline E_\bullet)<+\infty$. 
For any increasing sequence $\{b_k\}\subset\N$ such that $\{\deg_+(\overline E_{n_k})\}$ converges,
\end{proof}
\end{comment}
\section{Reminders on Arakelov geometry over adelic curves}
\subsection{Adelic curves}
\begin{defi}
Let $K$ be a field and $M_K$ be all its places. An \textit{adelic curve} is a 3-tuple $S=\adlcv$ where $(\Omega, \mathcal A, \nu)$ is a measure space consisting of the space $\Omega$, the $\sigma$-algebra $\mathcal A$ and the measure $\nu$, and $\phi$ is a function $(\omega\in\Omega)\mapsto \lvert\cdot\rvert_\omega\in M_K$ such that
$$\omega\mapsto \ln\lvert s\rvert_\omega$$
is $\nu$-integrable for any non-zero element $s$ of $K$. Moreover, if the integral of above function is always zero, then we say $S$ is \textit{proper}. For each $\omega\in\Omega$, we denote by $K_\omega$ the completion field of $K$ with respect to $\lvert\cdot\rvert_\omega$.
\end{defi}

Some typical examples are number fields, projective curves, polarised varieties. We refer to \cite[Section 3.2]{adelic} for detailed constructions. From now on, we assume $S$ to be proper unless it's specified.
\subsection{Adelic vector bundles}
Let $E$ be a vector space over $K$ of dimension $n$. Let $\xi=\{\nm_{\omega}\}$ be a norm family where each $\nm_\omega$ is a norm on $E_{K_\omega}:=E\otimes_K K_\omega$. We can easily define restriction $\xi_F$ of $\xi$ to a subspace $F$, quotient norm family $\xi_{E\twoheadrightarrow G}$ induced by a surjective homomorphism $E\twoheadrightarrow G$, the dual norm family $\xi^\vee$ on $E^\vee$, exterior power norm family and tensor product norm family. Please check \cite[Section 1.1 and 4.1]{adelic} for details.
\begin{defi}[Adelic vector bundles]
We say a norm family $\xi$ is \textit{upper dominated} if
$$\forall s\in E^*, \tint_\Omega \ln\lVert s\rVert_\omega\nu(d\omega)<+\infty.$$
Moreover, we say $\xi$ is \textit{dominated} if its dual norm $\xi^\vee$ on $E^\vee$ is also upper dominated. We say $\xi$ is \textit{measurable} if for any $s\in E^*$, the function $s\mapsto \lVert s\rVert_\omega$ is $\mathcal A$-measurable. If $\xi$ is both dominated and measurable, we say the pair $\overline E=(E,\xi)$ is an \textit{adelic vector bundle}. Note that the propery of being an adelic vector bundle is well-preserved after taking restriction to subspace, quotient, exterior power and tensor product.
\end{defi}

\begin{defi}[Arakelov degrees and slopes]
Let $\overline E=(E,\xi)$ be an adelic vector bundle. If $E\not=0$, then we define its \textit{Arakelov degree} as
$$\deg(\overline E):=-\int_\omega\ln \lVert s\rVert_{\mathrm{det}\,\xi,\omega} \nu(d\omega)$$
where $s\in \mathrm{det}E\setminus \{0\}$ and $\mathrm{det}\,\xi$ is the determinant norm family on $\mathrm{det}E$ (the highest exterior power). Note that this definition is independent with the choice of $s$ since the adelic curve $S$ is proper. If $E=0$, then by convention we define that $\deg(\overline E)=0$.
Moreover, we define the \textit{positive degree} as
$$\deg_+(\overline E):=\sup_{F\subset E}\deg(F,\xi_F),$$
i.e. the supremum of all its adelic vector sub-bundles' Arakelov degerees.
If $E\not=0$, its \textit{slope} $\widehat{\mu}(\overline E)$ is defined to be the quotient $\deg(\overline E)/\dim_K(E)$.
The \textit{maximal slope} and \textit{minimal slope} is defined respectively as
$$\begin{aligned}
\mumax(\overline E)&:=\begin{cases}\sup\limits_{0\not=F\subset E}\widehat \mu(F,\xi_F), &\text{ if }E\not=0\\
-\infty, &\text{ if }E=0 \end{cases}\\
\mumin(\overline E)&:=\begin{cases}\inf\limits_{E\twoheadrightarrow G\not=0}\widehat \mu(G,\xi_{E\twoheadrightarrow G}), &\text{ if }E\not=0\\
+\infty, &\text{ if }E=0 \end{cases}
\end{aligned}
$$
\end{defi}
\begin{rema}
If the field $K$ is of characteristic zero, then there exists a constant $C>0$  such that for any two adelic vector bundles $\overline E$ and $\overline F$, it holds that
$$\mumin(\overline E\otimes \overline F)\geq \mumin(E)+\mumin(F)-C\ln(\dim_K(E)\dim_K(F))$$
which is firstly proved in \cite{BOST2013} and reformulated in \cite[Chapter 5]{adelic}, called \textit{minimal slope property of level}$\geq C$. All the constant $C$ showed up in this article refers to the constant in this sense.
\end{rema}
\begin{defi}
Let $\overline E=(E,\xi)$ be an adelic vector bundle of dimension $n$. The \textit{Harder-Narasimhan} $\R$-filtration is given by
$$\mathcal F_{hn}^t(\overline E)=\sum_{\substack{0\not=F\subset E\\\mumin(F,\xi_F)\geq t}}F.$$
We denote by $\mumin(\overline E)=\widehat\mu_{n}\le \widehat\mu_{n-1}\le\cdots\le \widehat\mu_1=\mumax(\overline E)$ the jumping points of the $\R$-filtration, then it holds that 
$$\begin{aligned}\sum \widehat\mu_i&\le \deg(\overline E)\le \sum \widehat\mu_i+\frac{1}{2}n\ln n,\\
\sum \max(\widehat\mu_i,0)&\le \deg_+(\overline E)\le \sum \max(\widehat\mu_i,0)+\frac{1}{2}n\ln n.
\end{aligned}$$
\end{defi}
\subsection{Adelic Cartier divisors}
Let $X$ be a geometrically irreducible normal projective variety. Let $H$ be an very ample line bundle whose global sections $E:=H^0(X,H)$ is equipped with a dominated norm family $\xi=\{\nm_{\omega}\}_{\omega\in\Omega}$ i.e. $\overline E=(E,\xi)$ is an adelic vector bundle. For each place $\omega\in\Omega$, we denote by $\xanomega$ the analytification of $X\times_{\mathrm{Spec}K}\mathrm{Spec} K_\omega$ in the sense of Berkovich(see \cite{Berkovich}). As in \cite[2.2.3]{adelic}, the norm $\nm_{\omega}$ induces a \textit{Fubini-Study metric} $\varphi_{FS, \overline E,\omega}=\{\lvert\cdot\rvert_{\omega}(x)\}$, which is continuous in the sense that for any open subset $U\subset X$ and a section $s\in H^0(X,U)$, the function 
$$x\in U^{an}_\omega\mapsto \lvert s\rvert_\omega(x)$$
is continuous with respect to Berkovich topology, where $U^{an}_\omega$ is the analytification of $U\times_{\mathrm{Spec}K}\mathrm{Spec} K_\omega$.

For arbitrary line bundle $L$, and two continuous metric families $\varphi=\{\varphi_\omega\}$, $\varphi'=\{\varphi'_\omega\}$, we define the distance function $$\operatorname{dist}(\varphi,\varphi'):(\omega\in\Omega)\mapsto\sup_{x\in \xanomega}\lvert1\rvert_{\varphi_{\omega}-\varphi'_{\omega}}(x)$$
where $\lvert1\rvert_{\varphi_{\omega}-\varphi'_{\omega}}(x)$ is a continuous function on $X$ since $\varphi_\omega-\varphi'_\omega$ is a continous metric of $\O_X$.

We say the pair $(H,\varphi)$ of a very ample line bundle and a continuous metric family is an \textit{adelic very ample line bundle} if $\varphi$ is measurable (see \cite[ 6.1.4]{adelic}), and there exists an dominated norm family $\xi$ on the global sections $E=H^0(X,H)$ such that the distance function
$\operatorname{dist}(\varphi,\varphi_{FS,\overline E})$ is $\nu$-dominated.

As the very ample line bundles generates the Picard group $\mathrm{Pic}(X)$, we define the \textit{arithmetic Picard group} $\widehat{\mathrm{Pic}}(X)$ be the abelian group generated by all adelic very ample line bundles. An element in $\widehat{\mathrm{Pic}}(X)$ is called an \textit{adelic line bundle}.

For a Cartier divisor $D$ on $X$, we define the \textit{$D_\omega$-Green function} $g_\omega$ to be an element of the set
$$C_{gen}^0(\xanomega):=\{f\text{ is a continuous function on }U\mid \emptyset\not= U\mathop{\subset}_{open} \xanomega\}/\sim$$
where $f\sim g\text{ if they are identical on some } V_{\omega}^{an}$ for some non-empty open subset $V\subset X$ such that for any local equation $f_D$ of $D$ on $U$, $\ln|f_D|+g_\omega$ is continuous on $U^{an}_\omega$. It's easy to see that each $D_\omega$-Green function induce a continuous metric on the corresponding line bundle (see \cite[Section 2.5]{adelic} for details). Moreover, we say a pair $(D,g=\{g_\omega\})$ of Cartier divisor $D$ and Green function family $g$ is an \textit{adelic Cartier divisor} if the it corresponds to an adelic line bundle. We denote by $\widehat{Div}(X)$ the group of all adelic Cartier divisor. Let $\mathbb{K}=\Q\textit{ or }\R$, then we can define the set of adelic $\mathbb{K}$-Cartier divisors as
$$\widehat{Div}_{\mathbb{K}}(X)=\widehat{Div}(X)\otimes_\Z \mathbb{K}/\sim$$
where "$\sim$" is the equivalence relationship generated by
$\sum (0,g_i)\otimes k_i\sim (0,\sum g_i k_i)$, where $g_i$'s are continuous funciton families and $k_i\in \mathbb{K}$.

\section{Applications on $\chi$-volume function over an adelic curve}
In this section, we let $S=\adlcv$ be a proper adelic curve where $K$ is of characteristic $0$. Let $X$ be a geometrically irreducible smooth $K$-variety, and $(D,g)$ be a adelic $\R$-Cartier divisor on $X$. Then each $E_n:=H^0(X,\O_X(\lfloor nD\rfloor))$ admits with an adelic vector bundle structure by the norm family $\xi_{ng}$. Then we can equip $E_n$ with the Harder-Narasimhan filtration $\mathcal F^t_n E_n$ induced by $\xi_{ng}$.  Then this filtered linear series satisfies the $\delta$-superadditivity. Notice that there is a correspondence between $\R$-filtrations and ultrametric norms on $E_n$ over trivially valued $K$. We denote by $\lVert\cdot\rVert_n$ the norm induced by $\mathcal F^t_n E_n$. The volume, $\chi$-volume, asymptotic maximal slope. lower asymptotic minimal slope, upper asymptotic minimal slope of $(D,g)$ is defined to be the same with the definition \ref{def_asymp} of the corresponding ultrametrically normed graded linear series $\overline E_\bullet=\{(E_n,\nm_n)\}$.

We recall some previous concerning volume and $\chi$-volume here.
\begin{enumerate}
    \item $\vol(\cdot)$ is continuous on any finitely dimensional subspace of $\widehat{\Div}_\R(X)$.
    \item If $\mumin^{\sup}(D,g)>-+\infty$, then there exists a integrable function $\psi$ on $\Omega$ such that $$\deg(E_n,\xi_{n(g+\psi)})=\deg_+(E_n,\xi_{n(g+\psi)})$$
    for every $n\in \N_+$. In particular, $\vol(D,g+\psi)=\vol_\chi(D,g+\psi)$. 
\end{enumerate}
\subsection{Boundedness of asymptotic minimal slope}
\begin{prop}\label{prop_fin_gen_bound}
Let $\overline E_\bullet=\{\overline E_n=(E_n,\nm_n)\}_{n\in\N}$ be an ultrametrically normed graded linear series of finite type satisfying $\delta$-superadditivity. Then it holds that
$$\mumin^{\inf}(E_\bullet):=\liminf_{n\rightarrow +\infty}\frac{\mumin(\overline E_n)}{n}>-\infty$$
\end{prop}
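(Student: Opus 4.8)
The plan is to use finite type to reduce the estimate to the finitely many algebra generators, and then to turn the (strong) $\delta$-superadditivity into a geometric upper bound on the largest norm occurring in $E_n$. Since $\mumin(\overline E_n)=-\ln\max_{x\in E_n}\lVert x\rVert_n$, any bound of the form $\max_{x\in E_n}\lVert x\rVert_n\le B^n$ with $B$ independent of $n$ gives $\mumin(\overline E_n)/n\ge -\ln B$, and hence $\mumin^{\inf}(E_\bullet)\ge -\ln B>-\infty$.

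First I would fix finitely many homogeneous generators $f_1,\dots,f_k$ of the graded $K$-algebra $\bigoplus_{n}E_nY^n$, with $f_j\in E_{d_j}$ and $d_j\ge 1$ (degree-$0$ generators are scalars in $K$ and will be treated as coefficients). Finite generation means that for every $n$ the space $E_n$ is spanned over $K$ by the degree-$n$ monomials $m=f_{j_1}\cdots f_{j_l}$ with $d_{j_1}+\cdots+d_{j_l}=n$. Here the trivially valued setting is decisive: every nonzero scalar has norm $1$, and ultrametricity gives $\lVert\sum_\alpha c_\alpha m_\alpha\rVert_n\le\max_\alpha\lVert m_\alpha\rVert_n$, so that
$$\max_{x\in E_n}\lVert x\rVert_n=\max\bigl\{\lVert m\rVert_n : m\text{ a degree-}n\text{ monomial in the }f_j\bigr\}.$$
This identity is what reduces the whole problem to estimating the norms of monomials.

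Next, to each monomial I would apply the $r$-fold product inequality coming from the strong $\delta$-superadditivity, namely the submultiplicativity
$$\Bigl\lVert\prod_{i=1}^l f_{j_i}\Bigr\rVert_n\le\prod_{i=1}^l\lVert f_{j_i}\rVert_{d_{j_i}}\cdot\prod_{i=1}^l\dim_K(E_{d_{j_i}})^C.$$
Setting $A:=\max_j\lVert f_j\rVert_{d_j}$ and $D:=\max_j\dim_K(E_{d_j})$, which are finite because there are finitely many generators, and observing that the number of factors obeys $l\le n/\min_j d_j\le n$, each factor contributes at most $A\cdot D^C$; hence with $B:=\max(1,A\,D^C)$ we get $\lVert m\rVert_n\le B^n$ for every degree-$n$ monomial. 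Combining with the previous display yields $\max_{x\in E_n}\lVert x\rVert_n\le B^n$, and therefore $\mumin(\overline E_n)\ge -n\ln B$, which is exactly the desired conclusion.

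The hard point — and the reason the \emph{strong} form of $\delta$-superadditivity is needed rather than its two-term version — is keeping the correction factors geometric in $n$. Had I only the two-term inequality and expanded a length-$l$ monomial by iterated multiplication, each step would contribute a correction $\dim_K(E_e)^C$ for some intermediate degree $e\le n$; since $\dim_K(E_e)$ grows polynomially in $e$, these accumulate to a factor $\exp(O(n\ln n))$, which ruins the geometric bound and only produces $\mumin(\overline E_n)/n\ge -O(\ln n)$. The strong inequality circumvents this precisely because the dimension factors $\dim_K(E_{d_{j_i}})^C$ involve only the fixed generator degrees $d_j$ and are thus uniformly bounded. The only routine verification left is that scalar (degree-$0$) factors, having norm $1$, may be absorbed into the coefficients and so do not affect the count $l\le n$.
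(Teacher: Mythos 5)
Your proof is correct and follows essentially the same route as the paper's: reduce to monomials in generators of degree at most some fixed $N$, apply the multi-factor superadditivity inequality so that each of the at most $n$ factors contributes a bounded correction, and conclude a lower bound for $\mumin(\overline E_n)$ that is linear in $n$; the paper merely packages this as a surjection of operator norm $\le 1$ from an orthogonal direct sum of tensor products $\bigotimes_i (E_{\lambda_i},\nm_{\lambda_i}\dim_K(E_{\lambda_i})^C)$ onto $\overline E_n$, whereas you argue element-wise via $\mumin(\overline E_n)=-\ln\max_{x\in E_n}\lVert x\rVert_n$. Your remark that the \emph{strong} form of $\delta$-superadditivity is what keeps the correction linear rather than $O(n\ln n)$ is accurate, and it applies equally to the paper's argument, whose operator-norm claim also rests on the $r$-fold inequality even though the proposition's hypothesis names only the two-term version.
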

\begin{proof}
We fix a set of generators, and assume that they are of degree at most $N$.
Then for any $n\in\N$, let $$\Psi_n:=\{(\lambda_1,\lambda_2,\cdots,\lambda_r)\mid 1\leqslant\lambda_1\leqslant\lambda_2\leqslant\cdots\leqslant\lambda_r\leqslant N, \sum_i\lambda_i=n, r=1,\cdots, n\}.$$
Then the map $$\bigoplus^\perp_{(\lambda_1,\lambda_2,\cdots,\lambda_r)\in\Psi_n}(\otimes_{i=1}^r (E_{\lambda_i},\nm_{\lambda_i}\dim_K(E_n)^C)\longrightarrow \overline E_n$$
is surjective and of operator norm$\leqslant 1$. Therefore we have $$\mumin(\overline E_n)\geqslant \min\limits_{(\lambda_1,\lambda_2,\cdots,\lambda_r)\in\Psi_n}\{\sum_{i=1}^r (\mumin(\overline E_{\lambda_i})-\delta(\lambda_i))\}.$$
Let $L=\min\left\{\displaystyle\frac{\mumin(\overline E_l)-\delta(l)}{l}\right\}_{1\leqslant l\leqslant N}.$
Then $\displaystyle\frac{\mumin(\overline E_n)}{n}\geqslant L$ for every $n\in \N$.
\end{proof}

\begin{coro}
If the section ring of $D$ is finitely generated, then for any Green function family $g$ on $D$, it always holds that
$$\mumin^{\inf}(D,g)>-\infty.$$
In particular, the above inequality holds if $D$ is semiample.
\end{coro}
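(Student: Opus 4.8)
The plan is to deduce the statement directly from Proposition \ref{prop_fin_gen_bound}, whose two hypotheses I will verify for the ultrametrically normed graded linear series attached to $(D,g)$. Recall that, by the conventions fixed at the beginning of this section, $\mumin^{\inf}(D,g)$ is \emph{by definition} $\mumin^{\inf}(\overline E_\bullet)$, where $\overline E_\bullet=\{(E_n,\nm_n)\}$ has $E_n=H^0(X,\O_X(\lfloor nD\rfloor))$ and $\nm_n$ the ultrametric norm over $(K,\lvert\cdot\rvert_0)$ associated with the Harder--Narasimhan filtration $\mathcal F_n^t E_n$ of the adelic vector bundle $(E_n,\xi_{ng})$. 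In particular the trivially valued minimal slope $\mumin(\overline E_n)=-\ln(\max\{\lVert x\rVert_n\mid x\in E_n\})$ equals the smallest jumping point of that filtration, i.e.\ it coincides with the adelic minimal slope, so no reconciliation between the two notions is needed.

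First I would check that $\overline E_\bullet$ is of finite type. By definition this means that the graded $K$-algebra $\bigoplus_{n\in\N}E_n Y^n$ is finitely generated; but this algebra is exactly the section ring of $D$, so the hypothesis of the corollary is precisely what is required. Next I would invoke the $\delta$-superadditivity of $\{\mathcal F_n^t E_n\}$ recorded when $\overline E_\bullet$ was introduced: it follows from the minimal slope property of level $C$ over the adelic curve $S$ (available because $\mathrm{char}\,K=0$), applied to the multiplication maps $E_n\otimes E_m\rightarrow E_{n+m}$ coming from the algebra structure, with $\delta(n)=C\ln\dim_K(E_n)$. Crucially this holds for \emph{every} choice of Green function family $g$, since $C$ is universal. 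With both hypotheses in hand, Proposition \ref{prop_fin_gen_bound} yields $\mumin^{\inf}(\overline E_\bullet)>-\infty$, hence $\mumin^{\inf}(D,g)>-\infty$.

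For the final assertion I would reduce the semiample case to finite generation of the section ring. The key input is the classical fact that a semiample divisor has finitely generated section ring: a suitable multiple $\O_X(mD)$ is globally generated and defines a morphism $f\colon X\rightarrow Y$ with an ample divisor $A$ on $Y$ satisfying $mD=f^*A$, so that $\bigoplus_n H^0(X,\O_X(nmD))=\bigoplus_n H^0(Y,\O_Y(nA))$ is finitely generated because $A$ is ample, and one passes to the full section ring by a standard Veronese argument. The first part then applies.

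I expect the main obstacle to be the careful justification of the $\delta$-superadditivity, that is, checking that the operator-norm bound on the multiplication maps translates into the claimed filtration inclusion with the stated $\delta$; this is the one point where both the characteristic zero hypothesis and the minimal slope property genuinely enter. The finite-type verification and the semiample reduction are comparatively routine.
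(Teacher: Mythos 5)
Your proposal is correct and follows exactly the route the paper intends: the corollary is an immediate application of Proposition \ref{prop_fin_gen_bound} to the series $\overline E_\bullet=\{(E_n,\nm_n)\}$, whose finite type is the stated hypothesis and whose $\delta$-superadditivity was already recorded (via the minimal slope property of level $C$ in characteristic zero) when this series was introduced at the start of the section. The reduction of the semiample case to finite generation of the section ring is the standard argument you describe, so nothing further is needed.
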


\subsection{Continuity of $\chi$-volume}

\begin{prop}
Let $(L_1,\varphi_1)$, $(L_2,\varphi_2)$, $\dots$, $(L_r, \varphi_r)$ be adelic line bundles on $X$ such that all $L_i$'s are semiample.
For any $a=(a_1,a_2,\dots,a_r)\in \N^r$, denote that $$a\cdot\textnormal{\textbf{L}}:=\sum_{i=1}^r a_i L_i\text{ and }a\cdot \varphi:=\sum_{i=1}^r a_i \varphi_i.$$
Then there exists constants $S$ and $T$ such that
$$\mumin(H^0(X,a\cdot\textnormal{\textbf{L}}),\xi_{a\cdot \varphi})\ge S\cdot |a|+T$$
where $|a|:=a_1+a_2+\cdots+a_r$.
\end{prop}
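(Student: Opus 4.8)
The plan is to reduce the statement to a multigraded version of Proposition~\ref{prop_fin_gen_bound}. Write $E_a := H^0(X, a\cdot\mathbf{L})$. Since the Harder--Narasimhan filtration of the adelic vector bundle $(E_a,\xi_{a\cdot\varphi})$ induces an ultrametric norm $\nm_a$ over the trivially valued field $(K,|\cdot|_0)$ with $\mumin(E_a,\nm_a)=\mumin(E_a,\xi_{a\cdot\varphi})$, I will instead estimate the minimal slopes of the multigraded ultrametrically normed family $\{(E_a,\nm_a)\}_{a\in\N^r}$ over the trivially valued field.

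First I would record finite generation. As each $L_i$ is semiample, so is every $a\cdot\mathbf{L}$, and a sufficiently divisible multiple of $L_1+\cdots+L_r$ defines a morphism $f\colon X\to Y$ onto a normal projective variety with $L_i=f^*A_i$ for ample divisors $A_i$ on $Y$; hence $E_a\cong H^0(Y,\sum_i a_iA_i)$ and the multigraded section ring $R=\bigoplus_{a\in\N^r}E_a$ is a finitely generated $K$-algebra. Fix homogeneous generators of multidegrees $b^{(1)},\dots,b^{(s)}$ and put $N=\max_j|b^{(j)}|$. If $a$ does not lie in the submonoid of $\N^r$ generated by the $b^{(j)}$, then $E_a=0$ and the asserted inequality is trivial; so I may assume $a=\sum_k b^{(j_k)}$, in which case products of generators of total multidegree $a$ span $E_a$.

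The second and decisive input is the strong $\delta$-superadditivity of $\{(E_a,\nm_a)\}$ with $\delta(a)=C\ln\dim_K(E_a)$, $C$ being the constant of the minimal slope property. At every place the metric is multiplicative, so the multiplication map $E_{a_1}\otimes\cdots\otimes E_{a_r}\to E_{a_1+\cdots+a_r}$ has operator norm $\le 1$ over $S$; applying the minimal slope property (valid because $\mathrm{char}\,K=0$) to the tensor product of the corresponding Harder--Narasimhan subbundles then yields
$$\mathcal F^{t_1}_{a_1}E_{a_1}\cdots\mathcal F^{t_r}_{a_r}E_{a_r}\subset \mathcal F^{t_1+\cdots+t_r-\delta(a_1)-\cdots-\delta(a_r)}_{a_1+\cdots+a_r}E_{a_1+\cdots+a_r}.$$
The crux, and the step I expect to be the main obstacle, is that the penalty must be $\sum_i\delta(a_i)$, i.e. linear in the number $r$ of factors. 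Naively iterating the two-factor slope inequality produces a quadratic accumulation that would only give a superlinear bound; keeping it linear is precisely the tensorial content of the minimal slope property of level $\ge C$, and it is what forces the final estimate to be linear in $|a|$.

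Granting these, I finish exactly as in Proposition~\ref{prop_fin_gen_bound}. For each decomposition $a=\sum_k b^{(j_k)}$ the rescaled multiplication map
$$\bigoplus^{\perp}_{a=\sum_k b^{(j_k)}}\left(\bigotimes_k \big(E_{b^{(j_k)}},\,\nm_{b^{(j_k)}}\dim_K(E_{b^{(j_k)}})^C\big)\right)\longrightarrow (E_a,\nm_a)$$
is surjective of operator norm $\le 1$ by strong $\delta$-superadditivity. Over the trivially valued field the minimal slope of a tensor product is the sum of the minimal slopes, that of an orthogonal direct sum is their minimum, and $\mumin$ cannot decrease under a surjection of operator norm $\le 1$; combining these gives
$$\mumin(E_a,\nm_a)\ge \min_{a=\sum_k b^{(j_k)}}\sum_k\big(\mumin(E_{b^{(j_k)}},\nm_{b^{(j_k)}})-\delta(b^{(j_k)})\big).$$
Setting $S=\min_{1\le j\le s}\frac{\mumin(E_{b^{(j)}},\nm_{b^{(j)}})-\delta(b^{(j)})}{|b^{(j)}|}$ and using $\sum_k|b^{(j_k)}|=|a|$ yields $\mumin(E_a,\nm_a)\ge S\,|a|$; absorbing the finitely many remaining degrees into a constant $T$ gives the claim.
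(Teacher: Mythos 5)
Your route is genuinely different from the paper's, though both rest on the same two pillars (strong $\delta$-superadditivity of the Harder--Narasimhan filtrations and the decomposition-over-generators argument of Proposition~\ref{prop_fin_gen_bound}). The paper avoids working multigraded: it sets $E=\bigoplus_i L_i$, $H=\mathcal O_{\mathbb P(E)}(1)$, uses $H^0(\mathbb P(E),mH)=\bigoplus_{|a|=m}H^0(X,a\cdot\mathbf L)$ with the \emph{orthogonal direct sum} of the norms $\nm_{(a)}$, checks that this single-graded normed algebra is strongly $\delta$-superadditive, and then quotes Proposition~\ref{prop_fin_gen_bound} verbatim; since the minimal slope of an orthogonal direct sum is the minimum of the summands' minimal slopes, the uniform bound in $|a|$ falls out for every multidegree $a$ at once. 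Your version instead proves a multigraded analogue of Proposition~\ref{prop_fin_gen_bound} directly. That is a perfectly viable alternative, and your observation that the penalty in the $l$-fold product must be $\sum_j\delta(a^{(j)})$ rather than the quadratic accumulation from iterating the two-factor inequality is exactly right --- note, however, that the paper relies on the same multi-tensor form of the minimal slope property without further comment, so you are on equal footing there.

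The one genuine defect is your justification of finite generation of the multigraded ring $R=\bigoplus_{a}E_a$. There is in general \emph{no} single morphism $f\colon X\to Y$ with $L_i=f^*A_i$ for $A_i$ ample on $Y$: take $X=C_1\times C_2$ and $L_i=\mathrm{pr}_i^*(\text{ample})$; then $L_1+L_2$ is already ample, the associated morphism is an embedding, and neither $L_i$ is ample on $Y=X$. So your reduction lands you back at a multigraded ring of merely semiample (or nef) divisors, which is circular. The fact you need is true, but the clean way to get it is precisely the paper's device: $R$, regraded by total degree, is the section ring of the semiample bundle $H$ on $\mathbb P(E)$, hence finitely generated by Zariski--Fujita, and multihomogeneous components of a finite generating set generate $R$ as a multigraded algebra. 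With that substitution your argument closes up.
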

\begin{proof}
Let $E=\bigoplus_{i=1}^r L_i$, $\mathbb{P}(E):=\mathrm{\mathcal {P}roj_X}(\mathrm{Sym}(E))$ the projective bundle on $X$ associated with $E$. Let $H=\mathcal O_{\mathbb{P}(E)}(1)$.
Then $$H^0(\mathbb{P}(E),mH)=\bigoplus\limits_{a_1+\cdots+a_r=m} H^0(X,\sum_{i=1}^r a_i L_i)$$
Note that since $L_i$'s are semiample, so is $H$ on $\mathbb{P}(E)$.
For each $a\in \N^r$, we denote by $\nm_{(a)}$ the norm on $H^0(X,a\cdot \textbf{L})$ over trivially valued $K$ induced by the Harder-Narasimhan filtration of $\xi_{a\cdot \varphi}$. If we write $a=a^{(1)}+a^{(2)}+\cdots+a^{(l)}$ where $a^{(j)}\in \N^r$, then it holds that for any $u=u_1\cdots u_l\in H^0(X, a\cdot \textbf L)$ where $u_j\in H^0(X, a^{(j)}\cdot\textbf{L})$,
$$\lVert u\rVert_{(a)}\leqslant \prod_{i=1}^l\lVert u_j\rVert_{(a^{(j)})}\dim_K(H^0(X,a^{(j)}\cdot\textbf{L}))^C$$

Let $\nm_m$ be the direct sum of $\{\nm_{(a)}\mid a\in \N^r,|a|=m\}$.
Then it suffices to prove that
the normed graded algebra $$\bigoplus_{m\geqslant 0}(H^0(\mathbb{P}(E),mH),\nm_m)$$ is strong $\delta$-supperadditive. 
Let $s_i=\sum\limits_{|a|=m_i} u_a^{(i)}\in H^0(\mathbb{P}(E),m_iH)$ where $i=1,2,\dots,l$, $m_i\in \N$, $u_{a}^{(i)}\in H^0(X,a\cdot \textbf{L})$.
It holds that
$$\lVert s_i\rVert_{m_i}=\max_{|a|=m_i}\{\lVert u_a^{(i)}\rVert_{(a)}\}.$$
Moreover,
$$\allowdisplaybreaks{\begin{aligned}\lVert s_1\cdots s_l\rVert_m&=\max\limits_{|\alpha|=m}\left\{\left\lVert \sum\limits_{\substack{a^{(1)}+\cdots+a^{(l)}=a\\ |a^{(i)}|=m_i}}\prod_{i=1}^l u_{a^{(i)}}^{(i)}\right\rVert_{(\alpha)}\right\}\\
&\leqslant\max_{\substack{|a^{(i)}|=m_i\\i=1,\dots,l}}\left\lVert \prod_{i=1}^l u_{a^{(i)}}^{(i)}\right\rVert_{(a^{(1)}+a^{(2)}+\cdots+a^{(l)})}\\
&\leqslant \max_{\substack{|a^{(i)}|=m_i\\i=1,\dots,l}}\prod_{i=1}^l\left\lVert u_{a^{(i)}}^{(i)}\right\rVert_{(a^{(i)})}\dim_K(H^0(X,a^{(i)}\cdot \textbf{L}))^C\\
&\leqslant \prod_{i=1}^l{\max_{|a|=m_i}\{\lVert u_{a}^{(i)}\rVert_{(a)}\dim_K(H^0(X,a\cdot \textbf{L}))^C}\}\\
&\leqslant \prod_{i=1}^l\lVert s_i\rVert_{m_i}\dim_K(H^0(\mathbb{P}(E),m_iH))^C.
\end{aligned}}$$
Therefore there exists constants $S$ and $T$ such that $$\mumin(H^0(X,a\cdot L),\xi_{a\cdot \varphi})\geq \mumin(H^0(\mathbb{P}(E),|a|H),\nm_{|a|})\ge S\cdot |a|+T.$$
\end{proof}

\begin{theo}
Let $\overline D=(D,g)$, $\overline E_1=(E_1,h_1)$, $\dots, \overline E_r=(E_r,h_r)$ be adelic $\Q$-Cartier divisors on $X$ such that $D$ and $E_i$'s are semiample.
\begin{equation}\label{eq_conti_semiample}\lim_{\substack{\epsilon_1+\cdots+\epsilon_r\rightarrow 0\\\epsilon_i\in\Q_{\geq 0}}}\vol_\chi(\overline D+\sum_{i=1}^r\epsilon_i \overline E_i)=\vol_\chi(\overline D)\end{equation}
\end{theo}
\begin{proof}
Due to the homogeneity, we may assume that all $D$ and $E_i$ are integral semiample Cartier divisors. 
Then there exists constants $S$ and $T$ depending on $\overline D$ and $\overline E_1,\dots,\overline E_r$, such that
$$\mumin(H^0(X,n_0 D+\sum_{i=1}^r n_i E_i),\xi_{n_0g+\sum_{i=1}^r n_i h_i})\geqslant T+S\sum_{i=0}^{r}n_i$$
where $n_i\in \N$
Assume that $\epsilon_i=p_i/q_i$ where $p_i$ and $q_i$ are coprime positive integers and $q_i\geqslant 1$. Let $q=\prod_{i=1}^r q_i$, then it holds that
$$\mumin(H^0(X,mq(D+\sum_{i=1}^r \epsilon_i E_i)),\xi_{nq(g+\sum_{i=1}^r \epsilon_i h_i)})\geqslant T+Smq(1+\sum_{i=1}^r \epsilon_i)$$
for every $m\in\N$.
Therefore
$$\mumin^{\sup}(\overline D+\sum_{i=1}^r \epsilon_i \overline E_i)\geqslant S(1+\sum_{i=1}^r\epsilon_i).$$
Take $\nu$-integrable functions $\phi$ such that
$\displaystyle\int_\Omega \phi \nu(d\omega) =S$. Denote that $|\epsilon|=\sum_{i=1}^r \epsilon_i$, it holds that
\begin{equation}\label{eq_vol_chi_semiample}\vol(\overline D+\sum_{i=1}^r \epsilon_i \overline E_i+(0,1+|\epsilon|\phi))=\vol_\chi(\overline D+\sum_{i=1}^r \epsilon_i \overline E_i+(0,(1+|\epsilon|)\phi)).\end{equation}
Due to the continuity of $\vol(\cdot)$, (\ref{eq_conti_semiample}) can be easily derived from (\ref{eq_vol_chi_semiample}).
\end{proof}

\section{Applications on arithmetic Hilbert-Samuel function}
\subsection{Asymptotically modified norm}
Let $\overline E_\bullet=\{\overline E_n=(E_n,\nm_n)\}_{n\in\N}$ be an ultrametrically normed graded linear series of finite type satisfying $\delta$-superadditivity. Let $r_n=\dim_K(E_n)$.
\begin{prop}\label{prop_asy_norm}For any $n\in\N$ and $s\in E_n$, we define
$$\lVert s\rVert'_n:=\liminf_{m\rightarrow \infty}\lVert s^m\rVert_{nm}^{\frac{1}{m}}$$
Then the following four properties holds:
\begin{enumerate}
    \item[\textnormal{(1)}] $\lVert s\rVert'_n=\lim\limits_{m\rightarrow \infty}\lVert s^m\rVert_{nm}^{\frac{1}{m}}$.
    \item[\textnormal{(2)}] $\lVert\cdot\rVert'_n$ is an ultrametric norm on $E_n$ over trivially valued $K$.
    \item[\textnormal{(3)}] $\lVert s\rVert'_n \lVert t\rVert'_m\geqslant \lVert st\rVert'_{n+m}$ for any $s\in E_n$ and $t\in E_m$.
    \item[\textnormal{(4)}] $\displaystyle\frac{\lVert s\rVert'_n}{\lVert s\rVert_n}\leqslant r_n^C .$
\end{enumerate}
\end{prop}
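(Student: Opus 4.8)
The plan is to treat $\lVert\cdot\rVert'_n$ as a homogenized (asymptotic) norm and to extract all four assertions from the single submultiplicative estimate furnished by $\delta$-superadditivity,
$$\left\lVert \prod_{i=1}^r s_i\right\rVert_{n_1+\cdots+n_r}\leqslant \prod_{i=1}^r\lVert s_i\rVert_{n_i}\cdot\prod_{i=1}^r r_{n_i}^C,\qquad s_i\in E_{n_i},$$
together with the fact that finite type forces $r_N=\dim_K E_N$ to grow at most polynomially in $N$. The two consequences I will use repeatedly are that $r_{kN}^{1/k}\to 1$ and that $\sum_{k}k^{-2}\ln r_{kN}<+\infty$ for each fixed $N$.

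For (1), fix $s\in E_n$ and put $b_m=\ln\lVert s^m\rVert_{nm}$. Writing $s^{p+q}=s^p s^q$ and applying the product estimate gives $b_{p+q}\leqslant b_p+b_q+f(p)+f(q)$ with $f(k)=C\ln r_{nk}=O(\ln k)$. Since $\sum_k f(k)/k^2<+\infty$, a generalized Fekete lemma for sequences that are subadditive up to a summable error shows that $b_m/m$ converges; I would prove this through the dyadic estimate $b_{2^{j+1}}/2^{j+1}\leqslant b_{2^{j}}/2^{j}+f(2^{j+1})/2^{j+1}$, which makes $b_{2^{j}}/2^{j}+\sum_{i\geqslant j}f(2^{i+1})/2^{i+1}$ non-increasing and hence convergent along powers of $2$, and then interpolate to all $m$ using the two-term inequality. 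One also needs $b_m/m$ bounded below: every nonzero vector of $E_N$ has $\lVert\cdot\rVert_N\geqslant\exp(-\mumax(\overline E_N))$, and the surjection of Proposition \ref{prop_fin_gen_bound} (which is norm-non-increasing, so does not lower the minimal norm, while $\mumax$ of a tensor over the trivially valued field is additive and involves only the finitely many base pieces) bounds $\mumax(\overline E_{nm})=O(m)$ linearly. Thus $\lVert s^m\rVert_{nm}^{1/m}$ stays bounded away from $0$, so the limit exists, is positive, and equals the liminf, which is (1).

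Property (4) is the most direct: applying the product estimate to $m$ copies of $s$ yields $\lVert s^m\rVert_{nm}\leqslant\lVert s\rVert_n^m\, r_n^{Cm}$, whence $\lVert s^m\rVert_{nm}^{1/m}\leqslant\lVert s\rVert_n\, r_n^{C}$ and, in the limit, $\lVert s\rVert'_n\leqslant\lVert s\rVert_n\, r_n^{C}$. For (3) I would raise $st$ to the $k$-th power, $(st)^k=s^k t^k$, apply the product estimate to the two factors to get $\lVert (st)^k\rVert_{(n+m)k}\leqslant\lVert s^k\rVert_{nk}\lVert t^k\rVert_{mk}\, r_{nk}^C r_{mk}^C$, take $k$-th roots, and let $k\to+\infty$; by (1) the left side tends to $\lVert st\rVert'_{n+m}$, while on the right the dimension factors disappear because $r_{nk}^{C/k}\to1$, giving $\lVert st\rVert'_{n+m}\leqslant\lVert s\rVert'_n\lVert t\rVert'_m$.

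For (2), homogeneity is immediate since the base field is trivially valued, so $\lVert \lambda s\rVert_{nm}=\lVert s\rVert_{nm}$ for $\lambda\neq0$, and positivity is the lower bound recorded in (1). The ultrametric inequality is the only genuine computation: expanding $(s+t)^m=\sum_j\binom{m}{j}s^j t^{m-j}$, the ultrametric property of $\lVert\cdot\rVert_{nm}$ and $\lvert\binom{m}{j}\rvert_0\leqslant1$ give $\lVert (s+t)^m\rVert_{nm}\leqslant\max_{j}\lVert s^j\rVert_{nj}\lVert t^{m-j}\rVert_{n(m-j)}\, r_{nj}^C r_{n(m-j)}^C$. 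Fixing $\epsilon>0$, I would bound each $\lVert s^j\rVert_{nj}$ by $(\lVert s\rVert'_n+\epsilon)^j$ and each $\lVert t^{m-j}\rVert_{n(m-j)}$ by $(\lVert t\rVert'_n+\epsilon)^{m-j}$ once the exponent exceeds a fixed threshold, and by a fixed geometric constant for the finitely many small exponents; since the dimension factors are subexponential, every term has $m$-th root at most $\max(\lVert s\rVert'_n,\lVert t\rVert'_n)+\epsilon$ up to a factor tending to $1$. Letting $m\to+\infty$ and then $\epsilon\to0$ yields $\lVert s+t\rVert'_n\leqslant\max(\lVert s\rVert'_n,\lVert t\rVert'_n)$.

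The main obstacle is (1): the error $f(k)=O(\ln k)$ is exactly borderline, so a naive Fekete iteration that accumulates one error per step diverges, and the summability $\sum_k f(k)/k^2<+\infty$ must be exploited through the dyadic argument above. The secondary subtlety is the uniform treatment of the corner terms (one exponent small) in the ultrametric estimate of (2), which the threshold split is designed to handle.
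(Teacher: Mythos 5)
Your handling of (3) and (4) coincides with the paper's (the paper's proof of (4) is just ``a direct estimate of the limit'', which your computation $\lVert s^m\rVert_{nm}\leqslant \lVert s\rVert_n^m\,r_n^{Cm}$ makes explicit), and your proof of the ultrametric inequality in (2) is the same threshold split the paper performs: set $A=\max(\lVert s\rVert'_n,\lVert t\rVert'_n)$, absorb the finitely many small exponents into a constant $B$, note that for $m>2N$ at least one exponent exceeds the threshold, and let the subexponential dimension factors disappear under the $m$-th root. For (1) the paper only records the almost-superadditivity $a_{m+m'}\geqslant a_m+a_{m'}-\delta(nm)-\delta(nm')$ and asserts convergence; your dyadic Fekete-with-summable-error argument is a legitimate way to substantiate that assertion, exploiting exactly the summability $\sum_k \delta(nk)/k^2<+\infty$ that makes the logarithmic error borderline but harmless.

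The one step that does not work is your lower bound on $b_m/m$, equivalently the positivity $\lVert s\rVert'_n>0$ that you invoke for (2). A surjection of operator norm $\leqslant 1$ onto $\overline E_{nm}$ forces the norm of $E_{nm}$ to be \emph{at most} the quotient norm; it therefore bounds $\mumin(\overline E_{nm})$ from below (which is how Proposition \ref{prop_fin_gen_bound} is proved) but gives no upper bound on $\mumax(\overline E_{nm})$ and no lower bound on the minimal norm of the target --- ``norm-non-increasing, so does not lower the minimal norm'' is exactly backwards. In fact positivity can fail under the stated hypotheses alone: take $\mathcal F^tE_N=E_N$ for $t\leqslant N^2$ and $=0$ otherwise; this is $\delta$-superadditive because $n^2+m^2\leqslant(n+m)^2$, yet $\lVert s^m\rVert_{nm}^{1/m}=e^{-n^2m}\rightarrow 0$ for every nonzero $s\in E_n$. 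So some additional input such as $\mumax^{asy}(\overline E_\bullet)<+\infty$ --- true for the Harder--Narasimhan filtrations of adelic vector bundles in the paper's application, but not a consequence of finite type plus $\delta$-superadditivity --- is genuinely needed here; the paper's own proof also elides this point when it asserts that $a_m/m$ converges ``in $\R$''. Note that statement (1) itself survives without the lower bound, since in the degenerate case $\lVert s^m\rVert_{nm}^{1/m}$ simply converges to $0$ and the liminf still equals the limit.
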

\begin{proof}
(1)Let $a_m:=-\ln\|s^m\|_{nm}$. Since $\mathcal F_n^t$ is $\delta$-superadditive, we have
$$\mathcal F^{a_m}_{nm}E_{nm}\mathcal F^{a_{m'}}_{nm'}E_{nm'}\subset \mathcal F^{a_m+a_{m'}-\delta(nm)-\delta(nm')}E_{n(m+m')}$$
which implies that $a_{m+m'}\geqslant a_m+a_{m'}-\delta(nm)-\delta(nm')$. So the sequence $\left\{\displaystyle\frac{a_m}{m}\right\}$ converges in $\R$. 

(2) For any $s,t\in E_n$, $\lVert(s+t)^m\rVert_{nm}\leqslant \max\limits_{i=0,\dots,m}\left\{\lVert s^it^{m-i}\rVert_{nm} \right\}$.
Since the filtration is $\delta$-supperadditive, we can deduce that $$\lVert s^i t^{m-i}\rVert_{nm}\leqslant \lVert s^i\rVert_{ni}\lVert t^{m-i}\rVert_{n(m-i)}\exp(\delta(ni)+\delta(n(m-i))).$$
Let $A=\max\{\lVert s\rVert'_n,\lVert t\rVert'_n\}$. Then for any $\epsilon>0$, there is an integer $N$ such that $\lVert s^l\rVert_{nl}^{\frac{1}{l}}\exp(\frac{\delta(nl)}{l})\leqslant A+\epsilon$ and $\lVert t^l\rVert_{nl}^{\frac{1}{l}}\exp(\frac{\delta(nl)}{l})\leqslant A+\epsilon$ for every $l>N$.

Let $B=\max\limits_{i=0,\dots,N}\{\max(\lVert s^i\rVert_{ni}, \lVert t^i\rVert_{ni})\exp(\delta(ni))\}$.
When $m>2N$, either $i$ or $m-i$ is greater than $N$, thus
$$\lVert (s+t)^m\rVert_{nm}^{\frac{1}{m}}\leqslant \max\limits_{i=0,\dots,N}\{(A+\epsilon)^{\frac{m-i}{m}}B^{\frac{1}{m}}\}.$$
The right hand side of the inequality has the limit $A+\epsilon$ which implies that there exists $N'\in\N_+$ such that 
$\lVert(s+t)^m\rVert_{nm}^{\frac{1}{m}}\leqslant A+2\epsilon$ for every $m>N'$.
Therefore $\lVert s+t\rVert'_n\leqslant A$.

(3) Due to the $\delta$-superadditivity, we have $$\frac{-\ln\lVert(st)^l\rVert_{l(n+m)}}{l}\geqslant \frac{-\ln\lVert s^l\rVert_{nl}}{l}+\frac{-\ln\lVert t^l\rVert_{ml}}{l}-\frac{\delta(nl)}{l}-\frac{\delta(ml)}{l}$$ for every $l\in\N_+$. Let $l\rightarrow +\infty$, we obtain (3).

(4) This is a direct result from an estimate of the limit.
\end{proof}

By (2) of \ref{prop_asy_norm}, we can give an $\R$-filtration of $E_n$ induced by $\lVert\cdot\rVert'_n$. Let $\overline E'_n:=(E_n,\nm'_n)$. It's easy to see that $\widehat\mu_i(\overline E'_n)+\delta(n)\geqslant \widehat\mu_i(\overline E_n)$ because of (4) of Proposition \ref{prop_asy_norm}.
\begin{prop} It holds that
     \begin{equation}\label{eq_mumax}
         \mumax^{asy}(\overline E_\bullet)=\mumax^{asy}(\overline E'_\bullet).
     \end{equation}
\end{prop}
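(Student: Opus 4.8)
The plan is to establish the two inequalities $\mumax^{asy}(\overline E'_\bullet)\ge\mumax^{asy}(\overline E_\bullet)$ and $\mumax^{asy}(\overline E'_\bullet)\le\mumax^{asy}(\overline E_\bullet)$ separately. Throughout I use the description of the maximal slope over the trivially valued field dual to the one recalled for $\mumin$: if $\overline E_n=(E_n,\lVert\cdot\rVert_n)$ corresponds to an $\R$-filtration, then $\mumax(\overline E_n)=-\ln\min_{0\neq s\in E_n}\lVert s\rVert_n$. Indeed, diagonalizing the filtration by an orthogonal basis $e_1,\dots,e_{r_n}$ with $\lVert e_i\rVert_n=\exp(-\widehat\mu_i)$, every nonzero $s$ satisfies $\lVert s\rVert_n\ge\min_i\lVert e_i\rVert_n$ with equality for a suitable $e_i$, so $-\ln\min_{s}\lVert s\rVert_n=\max_i\widehat\mu_i=\widehat\mu_1=\mumax(\overline E_n)$.

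For the inequality $\ge$, I would simply invoke the remark preceding Proposition \ref{prop_asy_norm}: property (4) gives $\widehat\mu_1(\overline E'_n)+\delta(n)\ge\widehat\mu_1(\overline E_n)$, that is $\mumax(\overline E'_n)\ge\mumax(\overline E_n)-\delta(n)$. Dividing by $n$ and passing to the upper limit, the term $\delta(n)/n=C\ln(r_n)/n$ tends to $0$ because $\overline E_\bullet$ is of finite type, so $r_n$ grows at most polynomially and $\ln r_n=o(n)$. Hence $\mumax^{asy}(\overline E'_\bullet)\ge\limsup_n(\mumax(\overline E_n)/n-\delta(n)/n)=\mumax^{asy}(\overline E_\bullet)$.

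For the reverse inequality $\le$, which carries the real content, I would use the asymptotic definition of $\lVert\cdot\rVert'_n$ together with the multiplicative structure of the graded linear series. Fix $n$ and a nonzero $s\in E_n$. Since $\bigoplus_m E_m Y^m$ is a $K$-subalgebra of $K(X)[Y]$, the power $s^m$ lies in $E_{nm}$ and is nonzero, so $\lVert s^m\rVert_{nm}\ge\min_{0\neq t\in E_{nm}}\lVert t\rVert_{nm}$, i.e. $-\ln\lVert s^m\rVert_{nm}\le\mumax(\overline E_{nm})$. Dividing by $m$ and letting $m\to\infty$, part (1) of Proposition \ref{prop_asy_norm} yields
\[
-\ln\lVert s\rVert'_n=\lim_{m\to\infty}\frac{-\ln\lVert s^m\rVert_{nm}}{m}\le n\cdot\limsup_{m\to\infty}\frac{\mumax(\overline E_{nm})}{nm}\le n\cdot\mumax^{asy}(\overline E_\bullet),
\]
the last step because $\{nm\}_{m\ge 1}$ is a subsequence of $\N$. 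As this bound is uniform in $s$, taking the maximum over nonzero $s$ gives $\mumax(\overline E'_n)\le n\,\mumax^{asy}(\overline E_\bullet)$; dividing by $n$ and taking the upper limit in $n$ gives $\mumax^{asy}(\overline E'_\bullet)\le\mumax^{asy}(\overline E_\bullet)$.

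Combining the two inequalities establishes the equality (\ref{eq_mumax}). The main obstacle is the direction $\le$: one has to recognize that $-\ln\lVert s\rVert'_n$ is precisely the asymptotic maximal-slope contribution of the diagonal powers $s^m\in E_{nm}$, and to compare it against the sequence of maximal slopes restricted to the arithmetic progression $\{nm\}$, which is legitimate since any subsequence has upper limit bounded by $\mumax^{asy}(\overline E_\bullet)$. The direction $\ge$ is a formal consequence of the pointwise comparison (4) and the sublinearity of $\delta$.
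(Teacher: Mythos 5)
Your proof is correct and follows essentially the same route as the paper: the inequality $\mumax^{asy}(\overline E'_\bullet)\ge\mumax^{asy}(\overline E_\bullet)$ comes from property (4) together with $\delta(n)/n\to 0$, and the reverse inequality comes from relating $-\ln\lVert s\rVert'_n$ to the norms of the powers $s^m\in E_{nm}$ and hence to $\mumax(\overline E_{nm})$ along the arithmetic progression $\{nm\}_{m\ge 1}$. The only difference is cosmetic: the paper extracts a witnessing subsequence for each $t<\mumax^{asy}(\overline E'_\bullet)$, whereas you derive the uniform pointwise bound $\mumax(\overline E'_n)\le n\,\mumax^{asy}(\overline E_\bullet)$, which is a slightly cleaner packaging of the same idea.
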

\begin{proof}
It's obvious that $\mumax^{asy}(\overline E_\bullet)\leq \mumax^{asy}(\overline E'_\bullet)$. Conversely, for any $t<\mumax(\overline E'_n)$, there exists an increasing sequence of integers $\{n_k\in\N\}_{k\in\N}$ and a sequence of sections $\{s_k\in E_{n_k}\}_{k\in\N_+}$, such that for all $k\in\N$
$$\frac{-\ln\lVert s_k\rVert'_{n_k}}{n_k}> t.$$
By the definition of $\nm'_n$, for each $k\in \N$, $-\ln\lVert s_k^m\rVert_{mn_k}^{\frac{1}{mn_k}}>t$ for every $m\gg 0$. Therefore we can obtain another increasing sequence of integers $\{l_k:=m_k n_k|m_k\in \N_+\}$ and $t_k:=s_k^{m_k}$ such that
$$-\ln\lVert t_k\rVert_{l_k}^{\frac{1}{l_k}}>t$$
which implies that $\mumax^{asy}(\overline E_\bullet)\geq t$. Hence the equation (\ref{eq_mumax}) is obtained.
\end{proof}

For each $t\in\R$, we define $E_n^t:=\{s\in E_n\mid -\ln\lVert s\rVert'_n\geqslant nt\}$ and $E^t_\bullet:=\{E_n^t\}_{n\in\N}$. Then $E^t_\bullet$ is a graded linear series of subfinite type due to property (3) of Proposition \ref{prop_asy_norm}.

\subsection{Estimate on graded series of subfinite type}

This part is a result of \cite{chen:hilbertsamuel} after some modifications.
Let $X$ be a projective $K$-scheme of dimension $d$ admitting the following collection of morphisms $\{p_i:X_i\rightarrow C_i\}_{i=1,\dots,n}$
given by following construction:
\begin{enumerate}
    \item If $d=1$, then $C_1:=X$, $X_1\rightarrow C_1$ is the normalization of $X$.
    \item If $d>1$, then $C_1$ is a projective regular curve over $\mathrm{Spec}K$, $X_1=X$ and $p_1$ is a projective and flat $k$-morphism.
    \item For any $i\in\{2,\dots, d-1\}$, $C_i$ is a projective regular curve over $K(C_{i-1})$. $X_i$ is the the generic fiber of $p_{i-1}$ and $p_i$ is a projective flat morphism of $K(C_{i-1})$-schemes.
    \item $X_d$ is the normalization of the generic fiber of $p_{d-1}$.
\end{enumerate}

\begin{defi}
Let $L$ be a big line bundle on $X$. Let $E_\bullet$ be a graded subalgebra of the section ring $R(X,L)$ of $L$. We say that $E_\bullet$ \textit{contains an ample divisor} if
\begin{enumerate}
    \item There exists an $m\gg 0$ and a decomposition
    $$mL=A+F$$ where $A$ is ample and $F$ is effective.
    \item For every $k\gg 0$, we have the inclusion
    $$H^0(X,kA_m)\subset E_{km}\subset H^0(X,kmL)$$
\end{enumerate}
\end{defi}

\begin{lemm}
Let $(L,\varphi)$ be an adelic Cartier divisor on $X$ with $L$ being big. Let $\overline E_\bullet$ be the corresponding ultrametrically normed graded linear series. Then for any $t\leqslant\mumax^{asy}(L,\varphi)$, the graded linear series $E_\bullet^t$ given by asymptotically modified norms as described in previous subsection contains an ample divisor.
\end{lemm}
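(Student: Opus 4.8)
The goal is to show that for $t \leqslant \mumax^{asy}(L,\varphi)$, the graded linear series $E_\bullet^t$ (where $E_n^t = \{s \in E_n \mid -\ln\lVert s\rVert'_n \geqslant nt\}$) contains an ample divisor in the sense of the preceding definition. The strategy is to produce, for suitable large $m$, a decomposition $mL = A + F$ with $A$ ample and $F$ effective, together with the sandwich inclusion $H^0(X, kA_m) \subset E_{km}^t \subset H^0(X, kmL)$ for $k \gg 0$. The right inclusion is automatic since $E_\bullet^t$ is a subseries of $\overline E_\bullet$. The real content is the left inclusion: every section coming from a high power of an ample sub-bundle $A$ must have $-\ln\lVert\cdot\rVert'_{km} \geqslant kmt$, i.e.\ such sections are small enough in the asymptotically modified norm.

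\emph{First step: produce the ample decomposition.} Since $L$ is big, by Kodaira's lemma there is an $m \gg 0$ with $mL = A + F$, $A$ ample and $F$ effective; this is the geometric part and requires no arithmetic. I then want a single section realizing the asymptotic maximal slope at the right rate. Because $t \leqslant \mumax^{asy}(L,\varphi)$ and by the proposition equating $\mumax^{asy}(\overline E_\bullet) = \mumax^{asy}(\overline E'_\bullet)$, I can extract (after passing to a multiple and replacing $t$ by a slightly smaller value, absorbing an $\epsilon$) a section $s_0 \in E_{n_0}$ with $-\ln\lVert s_0\rVert'_{n_0} > n_0 t$, i.e.\ $s_0 \in E_{n_0}^{t}$ for the modified norm. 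The submultiplicativity property (3) of Proposition \ref{prop_asy_norm}, namely $\lVert s\rVert'_n \lVert u\rVert'_{m} \geqslant \lVert su\rVert'_{n+m}$, is exactly what lets me propagate smallness under multiplication.

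\emph{Second step: control sections from $H^0(X,kA_m)$.} The idea is that a section $\sigma \in H^0(X, kA_m)$, viewed inside $H^0(X, kmL)$ via multiplication by $F$'s defining section, should satisfy the required slope bound because multiplying by the fixed (finitely many) generators of the section ring degrades the modified norm only by a bounded multiplicative constant per unit degree, and that constant is absorbed into $t$ once $k$ is large. Concretely, I would write a section of $kA_m$ as a product involving powers of $s_0$ (or the ample-divisor sections) up to the fixed part $F$, apply property (3) repeatedly to get $-\ln\lVert\sigma\rVert'_{km} \geqslant km \cdot t'$ for some $t'$ that converges to $\mumax^{asy}$ as $k \to \infty$, and then observe that for $k \gg 0$ this exceeds $kmt$. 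This places $\sigma$ in $E_{km}^t$, giving the left inclusion.

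\emph{Main obstacle.} The delicate point is the uniformity: I must ensure that the slope rate $t'$ achieved by sections factoring through the ample piece does \emph{not} drop below $t$ for \emph{all} of $H^0(X, kA_m)$ simultaneously, not merely for the distinguished section $s_0$. This requires that $A$ (or a power of it) be globally generated by sections that are themselves small in the modified norm, so that \emph{every} element of $H^0(X,kA_m)$ is a combination of bounded-slope products; here the ultrametric (non-Archimedean, trivially valued) nature of $\nm'$ is essential, since it turns sums into maxima and thus reduces bounding an arbitrary section to bounding the generating monomials. I would therefore choose $A$ ample enough that $H^0(X, A_m)$ is generated by sections lying in $E_{m}^{t-\epsilon}$, which is possible precisely because $t \leqslant \mumax^{asy}$; verifying this generation-by-small-sections is where the argument of \cite{chen:hilbertsamuel} is adapted, and it is the step most likely to need care in tracking the $\delta(n) = C\ln\dim_K(E_n)$ error terms so that they remain $o(km)$ and do not corrupt the linear rate.
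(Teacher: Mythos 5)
The paper gives no proof of its own here --- it simply cites \cite[Lemma 1.6]{BC_Okounkov} --- and your sketch does follow that lemma's strategy in outline (Kodaira decomposition of $mL$ plus a distinguished section of large asymptotic slope, propagated by the submultiplicativity (3) of Proposition \ref{prop_asy_norm}). However, the way you resolve what you call the main obstacle contains a genuine gap. You propose to choose $A$ so that $H^0(X,A_m)$ is generated by sections lying in $E_m^{t-\epsilon}$ and assert that this ``is possible precisely because $t\leqslant\mumax^{asy}$''. That does not follow: the hypothesis $t\leqslant\mumax^{asy}(L,\varphi)$ only yields a sequence of \emph{individual} sections $s_k\in E_{n_k}$ with $-\ln\lVert s_k\rVert'_{n_k}>n_kt$; it says nothing about an entire ample linear system in a decomposition of $mL$ fixed in advance being spanned by such sections, and for $t$ close to $\mumax^{asy}$ the spaces $E_n^t$ are typically far too small for that. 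Relatedly, one cannot ``write a section of $kA_m$ as a product involving powers of $s_0$'': an arbitrary $\sigma\in H^0(X,kA_m)$ admits no such factorization, so property (3) cannot be applied to $\sigma$ directly.

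The correct mechanism (and the one in the cited lemma) is to twist the whole space rather than factor its elements: fix $a$ with $t<a<\mumax^{asy}$ and $s_0\in E_p$ with $-\ln\lVert s_0\rVert'_p\geqslant pa$, and replace the decomposition $mL=A+F$ by $(m+Np)L=A+\bigl(F+N\,\mathrm{div}(s_0)\bigr)$. Multiplication by $(s_Fs_0^N)^{\otimes k}$ sends $H^0(X,kA)$ into $E_{k(m+Np)}$, and property (3) gives $-\ln\lVert\sigma s_F^ks_0^{kN}\rVert'_{k(m+Np)}\geqslant-\ln\lVert\sigma s_F^k\rVert'_{km}+kNpa$. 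To conclude you need a second ingredient your sketch never supplies: a uniform linear lower bound $-\ln\lVert\tau\rVert'_{km}\geqslant-C'km$ valid for \emph{all} $\tau\in E_{km}$, which follows from property (4) combined with the lower bound $\mumin(\overline E_n)\geqslant-C''n$ coming from dominance of the norm family $\xi_{ng}$. With these two inputs, taking $N$ large relative to $(a-t)^{-1}$ and $C'$ forces $H^0(X,kA)\cdot(s_Fs_0^N)^{\otimes k}\subset E^t_{k(m+Np)}$ for every $k$, which is exactly the required left inclusion. (Note also that the statement should read $t<\mumax^{asy}$: at $t=\mumax^{asy}$ the series $E_\bullet^t$ may vanish identically and then cannot contain an ample divisor.)
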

\begin{proof}
See \cite[Lemma 1.6]{BC_Okounkov}.
\end{proof}

\begin{theo}\label{hil_sam_lin_ser}
Let $L$ be a big line bundle on $X$. There exists a $f(n)\simeq O(n^{d-1})$ such that for any graded subalgebra $F_\bullet$ of the section ring $R(X,L)$ containing an ample divisor, it holds that
$$\dim_K(F_n)\leqslant \mathrm{vol}(F_\bullet)n^d+f(n).$$
\end{theo}
\begin{proof}
See \cite[Theorem 5.1]{chen:hilbertsamuel}.
\end{proof}

\begin{theo}[Upper bound of arithmetic Hilbert-Samuel function]\label{Hil} Let $(D,g)$ be an adelic divisor on $X$ with $D$ being big. For each $n\in \N$, let $\overline E_n$ denote the pair of $E_n=H^0(X,\O_X(nD))$ and norm family $\xi_{ng}$. Then it holds that
$$\deg_+(\overline E_n)\leqslant \vol(D,g)\frac{n^{d+1}}{(d+1)!}+O(n^d)+(C+1/2)\cdot r_n \ln(r_n).$$
\end{theo}
\begin{proof}
Let $\nm_n$ be the ultrametric norm induced by the Harder-Narasimhan filtration on $\overline E_n$.
We apply the asymptotic modification on $\{(E_n,\nm_n)\}$
$$\begin{aligned}\deg_+(\overline E_n)&\leqslant \sum_{i=1}^{r_n}\max(\widehat\mu_i(\overline E_n),0)+1/2\cdot r_n \ln(r_n)\\
&\leqslant \sum_{i=1}^{r_n}\max(\widehat\mu'_i(\overline E_n),0)+(C+1/2)\cdot r_n \ln(r_n)\\
&= n\int_0^{+\infty}\dim_K(E_n^t)dt+C\cdot r_n \ln(r_n)+\delta(\overline E_n)\\
&\leqslant n\int_0^{\mumax^{asy}(D,g)}\mathrm{vol}(E^t_\bullet){n^d}dt+O(n^d)+(C+1/2)\cdot r_n \ln(r_n)\\
&=\vol(D,g)\frac{n^{d+1}}{(d+1)!}+O(n^d)+(C+1/2)\cdot r_n \ln(r_n)
\end{aligned}$$
The last equation is obtained by the virtue of \cite[Theorem 6.3.16]{adelic}. 
\end{proof}

\begin{coro} We keep the hypothesis in Theorem \ref{Hil}. If $D$ is semi-ample, then 
\begin{equation}\label{eq_hilbert}\deg(\overline E_n)\leqslant \vol_\chi(D,g)\frac{n^{d+1}}{(d+1)!}+O(n^d)+(C+1/2)\cdot r_n\ln(r_n)\end{equation}
\end{coro}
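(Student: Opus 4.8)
The goal is to derive the $\chi$-volume estimate for the Arakelov degree $\deg(\overline E_n)$ when $D$ is semiample, leveraging the already-proven bound on $\deg_+(\overline E_n)$ from Theorem \ref{Hil}. The plan is to control the difference $\deg_+(\overline E_n) - \deg(\overline E_n)$ by relating it to the negative part of the Harder-Narasimhan slopes, and then to show that this negative contribution is asymptotically governed by $\vol_\chi$ rather than $\vol$. Recall from the Harder-Narasimhan formalism that $\sum_i \widehat\mu_i(\overline E_n) \le \deg(\overline E_n)$ and $\sum_i \max(\widehat\mu_i(\overline E_n),0) \le \deg_+(\overline E_n)$, both with error at most $\tfrac12 r_n \ln r_n$. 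Since $\deg(\overline E_n) = \deg_+(\overline E_n) - \sum_i \max(-\widehat\mu_i(\overline E_n),0)$ up to the HN approximation, it suffices to replace the positive-part sum $\sum_i \max(\widehat\mu_i,0)$ by the full sum $\sum_i \widehat\mu_i$ and track how the asymptotics change.

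First I would run the same chain of inequalities as in the proof of Theorem \ref{Hil}, but \emph{without} truncating the slopes at $0$. That is, I would write
$$\deg(\overline E_n) \le \sum_{i=1}^{r_n}\widehat\mu_i(\overline E_n) + \tfrac12 r_n\ln r_n \le \sum_{i=1}^{r_n}\widehat\mu'_i(\overline E_n) + (C+\tfrac12)\,r_n\ln r_n,$$
using the comparison $\widehat\mu'_i(\overline E_n)+\delta(n)\ge \widehat\mu_i(\overline E_n)$ between the original and asymptotically modified slopes noted after Proposition \ref{prop_asy_norm}. The key point is that semiampleness of $D$ forces the asymptotic minimal slope to be bounded below: by the Corollary following Proposition \ref{prop_fin_gen_bound}, $\mumin^{\inf}(D,g)>-\infty$, so the negative slopes cannot drift off to $-\infty$ faster than linearly in $n$. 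This is what makes the integral representation of the slope sum converge over the full real line rather than only over $[0,+\infty)$.

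Next I would express $\sum_i \widehat\mu'_i(\overline E_n)$ as an integral. For the full (non-truncated) slope sum one has
$$\sum_{i=1}^{r_n}\widehat\mu'_i(\overline E_n) = n\int_{-\infty}^{+\infty}\bigl(\dim_K(E_n^t) - r_n\,\mathbbm 1_{t<0}\bigr)\,dt,$$
or more convergently, one splits at $0$ and treats the positive part exactly as in Theorem \ref{Hil} via $E_n^t$ and Theorem \ref{hil_sam_lin_ser}, while the negative part $\int_{\mumin'(\overline E_n)/n}^{0}(\dim_K E_n^t - r_n)\,dt$ is controlled on the finite window $[\mumin^{\inf}(D,g)-\epsilon, 0]$ guaranteed by semiampleness. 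Passing to the limit and invoking \cite[Theorem 6.3.16]{adelic} to identify the integral of $\mathrm{vol}(E^t_\bullet)$ against $t$ over the whole range with $\vol_\chi(D,g)\cdot\tfrac{(d+1)}{(d+1)!}$ yields the stated bound (\ref{eq_hilbert}).

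The main obstacle is the lower end of the integral: unlike the $\deg_+$ estimate, where truncation at $0$ makes the integrand vanish automatically for $t<0$, here I must justify that the contribution of negative slopes integrates to the $\chi$-volume and contributes only an $O(n^d)$ error beyond it. This is precisely where semiampleness is indispensable — it provides the uniform lower bound $\mumin(\overline E_n)/n \ge S$ (via the Proposition on $\mumin(H^0(X,a\cdot\textbf{L}),\xi_{a\cdot\varphi})$ in the previous section), so the range of integration is a \emph{compact} interval independent of $n$, and on it the dominated-convergence-type argument underlying \cite[Theorem 6.3.16]{adelic} applies uniformly. Without semiampleness this interval could be unbounded and the negative-slope contribution uncontrolled, which is exactly why the corollary is stated only in the semiample case.
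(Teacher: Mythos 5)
Your strategy is genuinely different from the paper's, and it contains a real gap. The paper does not re-run the integral argument of Theorem \ref{Hil} over the negative slopes at all: it uses the twisting device recalled at the start of Section 4 (item 2 of the list there). Since semiampleness forces $\mumin^{\inf}(D,g)>-\infty$, one may add an integrable function $\psi$ to $g$ so that $\deg(E_n,\xi_{n(g+\psi)})=\deg_+(E_n,\xi_{n(g+\psi)})$ for all $n$ and $\vol(D,g+\psi)=\vol_\chi(D,g+\psi)$; the paper then applies Theorem \ref{Hil} verbatim to $(D,g+\psi)$ and untwists, using the Koll\'ar--Matsusaka Riemann--Roch type inequality to absorb the discrepancy between $n r_n A$ and $\mathrm{vol}(D)\frac{n^{d+1}}{d!}A$ (with $A=\int_\Omega\psi\,\nu(d\omega)$) into the $O(n^d)$ term. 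Your diagnosis that semiampleness enters only through the lower bound on the asymptotic minimal slope agrees with the paper; what differs is how that bound is exploited.

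The gap in your version is the identification step at the end. The citation \cite[Theorem 6.3.16]{adelic} is used in the paper only to identify $(d+1)!\int_0^{\mumax^{asy}}\mathrm{vol}(E^t_\bullet)\,dt$ with $\vol(D,g)$; you need the $\chi$-analogue $(d+1)!\int_{\R}\bigl(\mathrm{vol}(E^t_\bullet)-\mathrm{vol}(D)\,\mathbbm{1}_{t<0}\bigr)\,dt=\vol_\chi(D,g)$, which does not follow from that citation and is not immediate. The asymptotically modified slopes satisfy only the one-sided comparison $\widehat\mu'_i(\overline E_n)+\delta(n)\geqslant\widehat\mu_i(\overline E_n)$ (Proposition \ref{prop_asy_norm}(4) gives no bound in the other direction, since $\delta$-superadditivity yields submultiplicativity but not supermultiplicativity of the norms), so a priori $\sum_i\widehat\mu'_i(\overline E_n)$ could strictly overshoot the $\limsup$ defining $\vol_\chi$, and your chain would then prove an upper bound by the wrong constant. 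Closing this requires either establishing the existence of the limit measure of the filtrations and its first-moment formula (a Boucksom--Chen type argument), or, circularly, the very twisting trick the paper uses. Separately, to control the term $n t_0 r_n$ (equivalently the $-r_n\mathbbm{1}_{t<0}$ part of your integrand) against $\mathrm{vol}(D)$ you need $r_n=\mathrm{vol}(D)n^d+O(n^{d-1})$; that is exactly the Koll\'ar--Matsusaka input the paper invokes and your write-up omits.
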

\begin{proof}
Since there exists an integrable function $\psi$ on $\Omega$ such that 
$$\forall n\in\N_+, \deg(E_n,\xi_{ng}+n\psi)=\deg_+(E_n,\xi_{ng}+\psi)$$
and $$\vol(D,g+\psi)=\vol_\chi(D,g+\psi),$$
applying previous proposition, we obtain that
$$\begin{aligned}\deg(&\overline E_n)+n\cdot r_n A\\ &\leqslant \vol_\chi(D,g) \frac{n^{d+1}}{(d+1)!}+\mathrm{vol}(D) \frac{n^{d+1}}{d!}A+O(n^d)+(C+1/2)\cdot r_n \ln(r_n)\end{aligned}$$
where $A=\int_\Omega \psi \nu(d\omega)$.
Then the formula (\ref{eq_hilbert}) is obtained by Riemann-Roch type inequality of Kollár and Matsusaka\cite{kollar1983}.
\end{proof}

\bibliography{mybibliography}%This it the name of the file where you put all your entries
\bibliographystyle{smfplain}
\end{document}